\theoremstyle{plain}
\newtheorem{thm}{Theorem}[section]
\newtheorem{cor}[thm]{Corollary}
\newtheorem{prop}[thm]{Proposition}
\theoremstyle{definition}
\theoremstyle{remark}
\newtheorem{rem}[thm]{Remark}
\numberwithin{equation}{section}
\newcommand{\de}{\partial}
\newcommand{\R}{\mathbb{R}}
\newcommand{\N}{\mathbb{N}}
\newcommand{\eps}{\varepsilon}
\newcommand{\average}{{\mathchoice {\kern1ex\vcenter{\hrule height.4pt
width 6pt depth0pt} \kern-9.7pt} {\kern1ex\vcenter{\hrule
height.4pt width 4.3pt depth0pt} \kern-7pt} {} {} }}
\def\R{\mathbb{R}}
\begin{document}

\title{On the obstacle problem for the 1D wave equation}

\author{Xavier Fern\'andez-Real}
\address{ETH Z\"urich, Department of Mathematics, R\"amistrasse 101, 8092 Z\"urich, Switzerland}
\email{xavierfe@math.ethz.ch}

\author{Alessio Figalli}
\address{ETH Z\"urich, Department of Mathematics, R\"amistrasse 101, 8092 Z\"urich, Switzerland}
\email{alessio.figalli@math.ethz.ch}

\keywords{Obstacle problem, wave equation.}

\subjclass[2010]{35R35, 35L05, 35B65.}

\maketitle

\begin{center}
{\it A Sandro Salsa per il suo $50_{14}$ compleanno, con amicizia ed ammirazione}
\end{center}

\begin{abstract}
Our goal is to review the known theory on  the one-dimensional obstacle problem for the wave equation, and to discuss some extensions. We introduce the setting established by Schatzman within which existence and uniqueness of solutions can be proved, and we prove that (in some suitable systems of coordinates) the Lipschitz norm is preserved after collision. As a consequence, we deduce that solutions to the obstacle problem (both simple and double) for the wave equation have bounded Lipschitz norm at all times. Finally, we discuss the validity of an explicit formula for the solution that was found by Bamberger and Schatzman.
\end{abstract}

\vspace{5mm}

\section{Introduction}

\subsection{The obstacle problem} Consider an infinite vibrating string represented by its transversal displacement, $u(x, t)\in \R$, with $x \in \R$ and $t\in [0, \infty)$, with initial conditions given by 
\[
\left\{
\begin{array}{rcll}
u(x, 0) & = &u_0(x)& \textrm{ for } x\in \R\\
u_t(x, 0) & = &u_1(x)& \textrm{ for } x\in \R.
\end{array}
\right.
\]
Suppose that the string is vibrating freely, but it is restricted to remain above a certain given obstacle, which we denote $\varphi = \varphi(x)$ (in particular, we assume $u_0\ge \varphi$). Thus, the vibrating string $u$ fulfills the homogeneous wave equation whenever $u >\varphi$:
\begin{equation}
\label{eq.we}
\square u := u_{tt}-u_{xx}  = 0\quad\textrm{in}\quad \{u >\varphi\}. 
\end{equation}
In order to get a closed system to describe this phenomenon, one also needs to provide information regarding the interaction between the string and the obstacle. As we will explain, a natural condition is to assume that the string \emph{bounces} elastically at the point of contact, in the sense that the sign of the velocity is instantly flipped. That is, if $(x_\circ, t_\circ)$ is a contact point (i.e., $u(x_\circ, t_\circ)  = \varphi(x_\circ)$), then 
\begin{equation}
\label{eq.refl}
u_t(x_\circ, t_\circ^+) = - u_t(x_\circ, t_\circ^-),
\end{equation}
where $t_\circ^\pm$ denotes taking limits $t_\circ\pm \eps$ as $\eps \downarrow 0$. 

Let us assume that the obstacle is given by a wall, so that we can take $\varphi \equiv 0$. This problem was first studied by Amerio and Prouse in \cite{AP75} in the finite string case (with fixed end-points), constructing  a solution ``by hand'' by following the characteristic curves and extending the initial condition through the lines of influence.  This proved existence and uniqueness in a ``non-standard'' class of solutions, by means of very intuitive methods.

A similar approach was used by 	Citrini in \cite{Cit75} to study properties of solutions (in particular, the number of times the obstacle is hit) assuming that collisions can lose energy and be either inelastic, partially elastic, or completely elastic, by replacing equation \eqref{eq.refl} with  $u_t(x_\circ, t_\circ^+) = - hu_t(x_\circ, t_\circ^-)$, where $h\in [0, 1]$ denotes the loss of energy in each collision. In this case, its clear that if $h < 1 $ then the local kinetic energy, namely $u_t^2$, is no longer preserved. 

In this work we focus on the approach introduced by Schatzman in \cite{Sch80}, where existence and uniqueness of solutions was proved for a more natural class of solutions (with initial conditions $u_0\in W^{1,2}$ and $u_1\in L^2$). To do so, instead of proceeding with a variational proof, Schatzman  explicitly expresses the solution in terms of the free wave with the same initial data: she adds to the free wave an appropriate measure convoluted with the fundamental solution of the wave operator, in order to ensure that the specular reflection holds. We show that, at least in the ``right'' system of coordinates, solutions built in this way preserve the Lipschitz constant in space-time (see Corollary~\ref{cor.Lip}). This immediately yields that, even if one considers a second obstacle acting from above (say, $\bar\varphi \equiv 1$, so that $\varphi\le u \le\bar \varphi$ for all times), a suitable Lipschitz-type norm of the solution is constant in time. In particular, solutions remain uniformly Lipschitz independently of the number of collisions (see Proposition~\ref{prop.dlip}). 

It is important to notice that, in \cite{BS83}, Bamberger and Schatzman studied a penalized problem and proved the convergence of solutions to the solution of the obstacle problem for the wave equation, with general obstacles. In that paper they also gave a simple explicit formula for the solution to the obstacle problem when the obstacle is zero, but unfortunately their formula is not correct, as we shall discuss in Section \ref{sect:counterex}.

\subsection{Other problems with constrains} The works mentioned so far cover most of the literature regarding the obstacle problem in the context of the wave equation, which is mostly restricted to the one-dimensional case where characteristic equations can be extensively used. The lack of results in higher dimensions or more general obstacles could be associated to the need of a more precise model, see Subsection~\ref{ss.gen} below. We hope that this paper will be of stimulus for investigating these more general problems.

There are also other problems with constrains within the context of hyperbolic equations that seem to exhibit cleaner behaviors. In particular, the \emph{thin} obstacle problem for the wave equation is a simple approximation to the general dynamical Signorini problem. In this case, one looks for solutions to the wave equation with a unilateral constrain posed on a lower-dimensional manifold. The problem was original studied by Amerio in \cite{Ame76} and Citrini in \cite{Cit77}, and later by Schatzman in \cite{Sch80b}, where existence and uniqueness is proved: contrary to the obstacle problem presented above, the conservation of energy is a direct consequence of the equations of motion and does not need to be imposed to have a well-posed problem. Later, Kim studied the problem in a variational way in \cite{Kim89}, and more recently, even in a non-deterministic approach in \cite{Kim10}. 

\subsection{On the obstacle and the model}
\label{ss.gen}
As mentioned above, we restrict our attention to constant obstacles ($\varphi \equiv 0$). As a direct consequence, our result directly applies if one considers linear obstacles $\varphi = ax + b$, since $\square \varphi \equiv 0$ and the specular reflection of the wave when hitting obstacle (in the vertical direction) is still preserved. Alternatively, the function $u -\varphi$ still presents conservation of energy (see \eqref{eq.consene} below).

Notice, however, that time-dependent obstacles might present added difficulties. For instance, even the simple case $\varphi = t$ does not directly follow from our analysis, since specular reflection is not preserved  if one replaces $u$ by $u-t$.

In \cite{Sch80} Schatzman studies the case of general convex obstacles ($\varphi''\ge 0$). This condition is necessary to use the techniques presented there: when the obstacle is not convex, one could have infinitely many collisions accumulating in space-time to a single point. The convexity of the obstacle ensures that collisions occur only once at each point in the infinite string case (namely $x 
\in \R$), and that they do not accumulate in time in the finite string case (namely $x \in I$ where $I$ is a bounded interval, and $u$ is fixed on $\partial I$). In fact, in the infinite string case, one usually expects solutions to diverge to infinity as time goes by (see Remark \ref{rem.inf}). 

It is currently unclear how the reflection condition \eqref{eq.refl} should be modified in the general obstacle case. Indeed, one would expect reflections to occur perpendicularly to the obstacle, rather than vertically. Vertical reflections come from a small oscillation assumption, which is also the same assumption used to derive the wave equation as a model for a vibrating string (see for instance \cite[Chapter 1.4.3]{PR05}). In this sense it is not reasonable to assume non-flat obstacles, and one may wonder whether one can prove existence/uniqueness results assuming {small} initial data and an obstacle with {small} oscillations. 

Alternatively, if one wants to impose reflections perpendicular to the obstacle (thus hoping to avoid accumulation of collisions), one would need to consider a rotation invariant equation (in the graph space $(x, u(x, t)) \in \R^2$) instead of the wave equation (compare for instance \cite[Equation (1.28) vs (1.29)]{PR05}).
Investigating these modeling questions is a very interesting problem, and we hope that this paper will be a starting point to motivate this beautiful line of research.

Finally, we should mention that in this paper we consider the model of an infinite string. Nonetheless, the results presented here can be easily extended to the case of a finite string with fixed end-points, thanks  to the locality of our methods. 
\\[0.5cm]
\noindent
{\bf Acknowledgement:} This work has received funding from the European Research Council (ERC) under the Grant Agreement No 721675.

\section{Schatzman's existence and uniqueness}

We consider the zero obstacle case $\varphi \equiv 0$. Notice that, in the sense of distributions, the support of $\square u$ is contained in $\{u > 0\}$, and $\square u \ge 0$. On the other hand, $u \ge 0$ by assumption. Our problem can then be written as 
\begin{equation}
\label{eq.pb}
\left\{
\begin{array}{rcll}
\min\{\square u, u\} & = & 0& \textrm{ in } \R\times [0, \infty)\\
u(\cdot, 0) & = &u_0& \textrm{ in } \R\\
u_t(\cdot, 0) & = &u_1& \textrm{ in } \R.
\end{array}
\right.
\end{equation}
\begin{rem}
Notice that, formally, the formulation above is analogous to the formulation of the parabolic (or elliptic) obstacle problem, which can written as 
\[
\min\{Lu, u-\varphi\} = 0
\]
for the corresponding operator (say, $L = \de_t - \Delta$ or $L = -\Delta$). In the current situation, however, an extra condition will need to be imposed.
\end{rem}
We consider initial data such that $u_0\in W^{1, 2}_{\rm loc}(\R)$ (in particular, it is continuous) and $u_1\in L^2_{\rm loc}(\R)$, and we are interested in the existence and uniqueness of solutions in the  natural class 
\[
u\in L^\infty_{{\rm loc}, t} ((0, \infty); W^{1, 2}_{{\rm loc}, x} (\R))\cap W^{1,\infty}_{{\rm loc}, t} ((0, \infty); L^{ 2}_{{\rm loc}, x} (\R)).
\]
That is, for any compact $K\subset \subset \R$, and any $T > 0$,
\[
\int_K\left\{\left|u(x, t)\right|^2+\left|u_x(x, t)\right|^2+\left|u_t(x, t)\right|^2\right\}\, dx \le C(T, K)<\infty\quad\textrm{for a.e.  }t\in (0, T),
\]
for some constant $C(T, K)$ independent of $t$. 

As mentioned before, we need to provide information regarding the type of reflection we are expecting. That is, \eqref{eq.pb} is not enough to ensure a unique solution to our problem. In this case, the notion introduced by Schatzman  imposes a local energy conservation (corresponding to an elastic collision) in the form 
\begin{equation}
\label{eq.consene}
{\rm div}_{x, t} (-2u_x u_t, u_x^2 + u_t^2) = \frac{d}{dx}\left(-2u_x u_t\right) + \frac{d}{dt} \left(u_x^2 + u_t^2\right) = 0\quad\textrm{in}\quad \R\times (0, \infty),
\end{equation}
and needs to be understood in the sense of distributions. A posteriori, this notion implies that solutions to our problem are elastically reflected, as in \eqref{eq.refl}, which is well-defined almost everywhere. Thus, the equations describing our problem are \eqref{eq.pb}-\eqref{eq.consene}.

The main theorem in \cite{Sch80} is then the following:
\begin{thm}[{\cite[Theorem IV.1]{Sch80}}]
\label{thm.sch}
Let $u_0\in W^{1, 2}_{\rm loc}(\R)$ and $u_1\in L^2_{\rm loc}(\R)$. Assume that $u_0 \ge 0$, and that $u_1 \ge 0$ a.e. in $\{u_0 = 0\}$. Then, there exists a unique solution $u\in L^\infty_{{\rm loc}, t} ((0, \infty); W^{1, 2}_{{\rm loc}, x} (\R))\cap W^{1,\infty}_{{\rm loc}, t} ((0, \infty); L^{ 2}_{{\rm loc}, x} (\R))$ to
\begin{equation}
\label{eq.pb2_s}
\left\{
\begin{array}{rcll}
\min\{\square u, u\} & = & 0& \textrm{ in } \R\times [0, \infty)\\
u(\cdot, 0) & = &u_0& \textrm{ in } \R\\
u_t(\cdot, 0) & = &u_1& \textrm{ a.e. in } \R.
\end{array}
\right.
\end{equation}
such that \eqref{eq.consene} holds in the sense of distributions. 
\end{thm}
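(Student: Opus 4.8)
The plan is to follow Schatzman's original construction but organize it around the d'Alembert change of variables, which trivializes the free wave equation and turns the reflection condition into a tractable functional equation. Concretely, I would introduce characteristic coordinates $\xi = x - t$, $\eta = x + t$, under which $\square u = 0$ becomes $u_{\xi\eta} = 0$, so that away from the obstacle $u$ splits as a sum of a right-moving and a left-moving profile. The free wave $w$ with the given initial data $(u_0,u_1)$ is given explicitly by d'Alembert's formula, and because $u_0 \in W^{1,2}_{\rm loc}$, $u_1 \in L^2_{\rm loc}$, $w$ lies in exactly the claimed function class. The ansatz, following Schatzman, is to look for $u = w + E * \mu$, where $E$ is the fundamental solution of $\square$ (i.e. $E = \tfrac12 \mathbbm{1}_{\{t > |x|\}}$) and $\mu \ge 0$ is a nonnegative measure supported on the contact set $\{u = 0\}$; the sign condition on $\mu$ encodes $\square u \ge 0$, and $u = 0$ on $\operatorname{supp}\mu$ gives the complementarity. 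One then must check that this ansatz, together with the reflection/energy condition \eqref{eq.consene}, pins down $\mu$ uniquely.

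The key steps, in order, would be: (1) reduce to a local statement — fix a compact space-time region and, using finite speed of propagation, work on a bounded characteristic rectangle; (2) reformulate the problem on this rectangle as a fixed-point problem for the measure $\mu$ (equivalently, for the trace of $u$ along the string, or for the ``reflected'' characteristic data), exploiting that $(E*\mu)(x,t)$ at a point depends only on $\mu$ restricted to the backward light cone; (3) set up an iteration that builds $\mu$ layer by layer in time (a Picard-type scheme marching forward over time slabs of small width $\tau$, on each of which the map is a contraction in a suitable norm — here $L^\infty_t W^{1,2}_x \cap W^{1,\infty}_t L^2_x$ or its characteristic-coordinate analogue), and pass to the limit; (4) verify that the limit satisfies \eqref{eq.pb2_s} in the distributional sense and that $u \ge 0$, $\square u \ge 0$, $u \cdot \square u = 0$; (5) show \eqref{eq.consene} holds — this is where the specific structure of elastic reflection enters, and one checks that the divergence identity is equivalent, along the contact set, to the velocity flip \eqref{eq.refl}, which in turn fixes the size of the atoms of $\mu$ in terms of the incoming velocity; (6) prove uniqueness by a Gronwall/energy argument: given two solutions, their difference satisfies a wave inequality, and the energy identity \eqref{eq.consene} for each, combined with the sign conditions, forces the difference and the difference of the measures to vanish on each time slab, propagating forward.

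The main obstacle is step (5) combined with the uniqueness part of step (6): making rigorous sense of the trace of $u_t$ on the (a priori only measurable) contact set and showing that imposing \eqref{eq.consene} distributionally is exactly equivalent to selecting, among all $\mu \ge 0$ producing an admissible $u$, the one corresponding to elastic bounces. The regularity is borderline — $u_x, u_t$ are merely $L^2$, so $u_x^2 + u_t^2$ and $u_x u_t$ are only $L^1$, and their distributional divergence being zero is a delicate condition; one has to exploit that, thanks to the characteristic decomposition, $u_x \pm u_t$ are each functions of a single characteristic variable (of bounded variation, locally), so the quadratic quantities make sense and the divergence condition becomes a pointwise jump relation at collision times. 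Establishing that this jump relation is precisely \eqref{eq.refl}, and that it both holds for the constructed $u$ and is rigid enough to force uniqueness, is the technical heart of the argument; the rest is a fairly standard, if careful, contraction-mapping and energy-estimate package adapted to the hyperbolic setting.
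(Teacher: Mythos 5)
Your ansatz $u = w + \mathcal{E}\ast\mu$ with $\mu\ge 0$ supported on the contact set, the passage to characteristic coordinates, and the identification of the energy condition \eqref{eq.consene} with the velocity flip \eqref{eq.refl} all match the strategy the paper follows (Schatzman's). But the step where you actually produce $\mu$ --- a Picard scheme marching over time slabs, claimed to be a contraction in $L^\infty_t W^{1,2}_x\cap W^{1,\infty}_t L^2_x$ --- is both where the proposal departs from the paper and where it has a genuine gap. Reflection is an $O(1)$ operation on the derivatives (the velocity flips sign entirely, so $\mu$ has density of order $|w_t|$, not of order the slab width), and the contact set depends discontinuously on the data; there is no small parameter making the slab map contract in those norms, and you give no reason why it should. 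The paper avoids iteration altogether: in the infinite-string, zero-obstacle case each point hits the obstacle at most once, so the collision curve is determined by the free wave alone --- $I$ is the influence region of $\overline{\{w<0\}}$, its boundary is the graph of a $1$-Lipschitz function $\tau$, and $\mu$ is given in closed form by $\langle\mu(w),\psi\rangle = -2\int_{\{\tau>0\}}(1-\tau'(x)^2)\,w_t(x,\tau(x))\,\psi(x,\tau(x))\,dx$, leading to the explicit representation \eqref{eq.sol}. Existence then reduces to verifying that this explicit $u$ stays nonnegative and satisfies \eqref{eq.consene}. This ``one collision per point'' structure, which lets $\mu$ be read off from $w$ directly, is the essential input you are missing.

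The uniqueness step has a second gap. For the difference $d = u_1 - u_2$ of two candidate solutions one has $\square d = \mu_1 - \mu_2$, a difference of measures with no sign, and the natural energy pairing is against $d_t$, for which the complementarity conditions give no sign either (unlike the elliptic or parabolic obstacle problem, where one pairs against $d$ itself). So ``wave inequality plus Gronwall'' does not close; this failure is exactly why \eqref{eq.consene} must be imposed as an additional selection criterion in the first place. The actual uniqueness argument must exploit the transport of $u_x\pm u_t$ along characteristics together with the pointwise flip forced by \eqref{eq.consene} --- the part you correctly flag as the technical heart but then defer. Note finally that the paper itself does not reprove Theorem~\ref{thm.sch}: it quotes it from \cite{Sch80} and only sketches the explicit construction and a formal verification of the reflection property, so the honest comparison is between your iteration scheme and Schatzman's closed-form construction, and the latter is what actually works.
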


\subsection{Construction of the solution}

In order to prove the previous result, Schatzman builds an explicit solution in terms of the free wave equation with the same initial data. Let us denote $w$ the solution to
\begin{equation}
\label{eq.fw}
\left\{
\begin{array}{rcll}
\square w & = & 0& \textrm{ in } \R\times [0, \infty)\\
w(\cdot, 0) & = &u_0& \textrm{ in } \R\\
w_t(\cdot, 0) & = &u_1& \textrm{ a.e. in } \R.
\end{array}
\right.
\end{equation}
If $\mathcal{E}$ denotes the fundamental solution to the one-dimensional wave equation,  namely
\begin{equation}
\label{eq:E}
\mathcal{E}(x, t) = \textstyle{\frac12} \mathbbm{1}_{\{t\ge |x|\}},
\end{equation}
then, by d'Alembert's formula, $w$ can written as 
$$
w = \de_t(\mathcal{E}\ast_x u_0) + \mathcal{E}\ast_x u_1,
$$
or more explicitly
$$
w(t,x)=\frac{u_0(x-t)+u_0(x+t)}{2}+\frac12\int_{x-t}^{x+t}u_1(s)\,ds\qquad \text{for all }(x,t)\in \R\times [0,\infty).
$$
(see for instance \cite[Chapter 4]{PR05}).

Let us also denote by $T_{x, t}^-$ the cone of dependence of the point $(x, t)$, namely
\[
T_{x, t}^- := \{(x', t') \in \R\times[0, \infty) : |x-x'|\le t-t'\},
\]
and by $T_{x, t}^+$ the cone of influence of the point $(x, t)$, that is
\[
T_{x, t}^+ := \{(x', t') \in \R\times[0, \infty) : |x-x'|\le t'-t\}.
\]
One can note that the solution $u$ coincides with $w$ outside the domain of influence of the set of points where $w < 0$. More precisely, if we denote 
\[
E := \overline{\{(x, t) \in \R\times[0, \infty) : w(x, t) < 0 \}}\qquad\textrm{and}\qquad I = \bigcup_{(x, t)\in E} T_{x, t}^+,
\]
then $u \equiv w $ in $(\R\times[0, \infty))\setminus I$ (since inside $(\R\times[0, \infty))\setminus I$ the solution $u$ has not touched the obstacle yet, and so it behaved as a free wave).
Thanks to this remark, one is left with building the solution $u$ inside the domain $I$.  (See Figure~\ref{fig.3} for a representation of such regions.)

\begin{figure}
\centering
\includegraphics{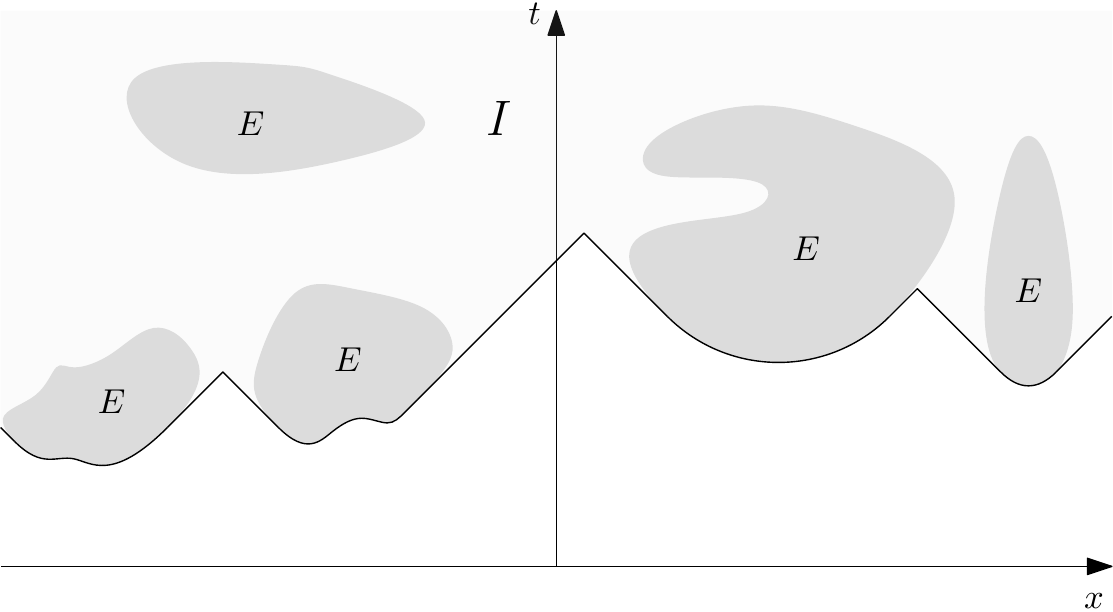}
\caption{Representation of the regions $E$ where $w < 0$ and its domain of influence, $I$. }
\label{fig.3}
\end{figure}

To do that, one first notice that $I$ coincides with the epigraph of a Lipschitz function $\tau:\R\to [0, \infty)$ of Lipschitz constant 1,  so that 
\[
I = \{(x, t) \in \R\times[0, \infty) : t \ge \tau(x)\}\quad\textrm{and}\quad |\tau'|\le 1\textrm{  in  } \R
\] 
(see \cite[Proposition II.3]{	Sch80}). Moreover, the \emph{active} contact points (that is, those points where the solution is not just grazing the obstacle) are determined by the graph of $\tau$ whenever $|\tau'|< 1$ (thus, $|\tau'| < 1$ implies $w(x, \tau(x)) = 0$). 

It is proven in \cite{Sch80} that the solution to \eqref{eq.pb2_s}-\eqref{eq.consene} is given by 
\begin{equation}
\label{eq.sol}
u = w + \mathcal{E}\ast \mu(w),
\end{equation}
where $\mu(w)$ is the measure  defined by the formula
\[
\langle \mu(w), \psi\rangle = -2\int_{\{x : \tau(x) > 0\}} (1-\tau'(x)^2) \,w_t(x, \tau(x) ) \,\psi(x, \tau(x))\, dx\qquad \forall\,\psi \in C_c(\R\times [0,+\infty)),
\]
and $\mathcal{E}$ is as in \eqref{eq:E}.

Then, Theorem~\ref{thm.sch} can be proved using the representation \eqref{eq.sol} by checking that such solution fulfils all the hypotheses.

In particular, it is observed that in the infinite string case, the obstacle is touched at most once at every point $x\in \R$ (that is, for any $x \in \R$, there is at most one time $t \in [0,\infty)$ such that $u(x, t) = 0$ and $u_t(x, t) < 0$). 

\subsection{Formal derivation of \eqref{eq.sol}} Let us formally show that the formula \eqref{eq.sol} solves the obstacle problem for the wave equation, in the sense \eqref{eq.we}-\eqref{eq.refl}, for $\varphi \equiv 0$ (for the actual proof, we refer the reader to \cite[Theorem IV.2]{Sch80}). 

Let $v(x, t) = \left[\mathcal{E}*\mu(w)\right] (x, t)$, so that 
\[
u = w + v.
\]
Since $\mathcal{E}$ is the fundamental solution to the one-dimensional wave equation, $u$ solves the wave equation outside
$$
{\rm supp}(\mu(w)) = \{(x, \tau(x)) : |\tau'(x)| < 1\}.
$$ Also, one can notice that $v \equiv 0$ in $\{t < \tau(x)\}$, and ${\rm supp}(\mu(w))\subset \{u \equiv 0\}$. 

We now (formally) show that the measure $\mu(w)$ ensures that the velocity of $u$ is instantly flipped when it hits the obstacle. Since $v \equiv 0$ in $\{t < \tau(x)\}$, it is enough to show that if $(x_\circ, t_\circ) = (x_\circ, \tau(x_\circ)) \in \{w \equiv 0\}$, then
\[
\lim_{t \downarrow t_\circ} v_t (x_\circ, t) = -2 w_t(x_\circ, t_\circ). 
\]
We want to compute 
\[
\lim_{t \downarrow t_\circ} \lim_{dt \downarrow 0} \frac{v (x_\circ, t+dt) - v(x_\circ, t)}{dt}.
\]
From the definition of $v$ and $\mu(w)$, we have that 
\[
v (x_\circ, t+dt) - v(x_\circ, t) = -\int_{A_{x_\circ, t, dt}} (1-\tau'(x)^2) w_t(x, \tau(x))\, dx,
\]
where 
\[
A_{x_\circ, t, dt} := \{x : t-\tau(x) < |x - x_\circ| < t+dt-\tau(x)\}.
\]
If we assume that $w\in C^1$, then when $t\downarrow t_\circ$ and $dt\downarrow 0$ we have that
\begin{equation}
\label{eq.vdiff}
v (x_\circ, t+dt) - v(x_\circ, t) \approx - |A_{x_\circ, t, dt}| (1-\tau'(x_\circ)^2)w_t(x_\circ, t_\circ). 
\end{equation}
On the other hand, let us denote 
\[
A_{x_\circ, t, dt} = L_A\cup R_A,
\]
where 
\[
L_A:= \{x\in A_{x_\circ, t, dt} : x  < x_\circ\}\quad \textrm{and}\quad R_A:= \{x\in A_{x_\circ, t, dt} : x  > x_\circ\}
\]
(see Figure~\ref{fig.A}).
\begin{figure}
\centering
\includegraphics{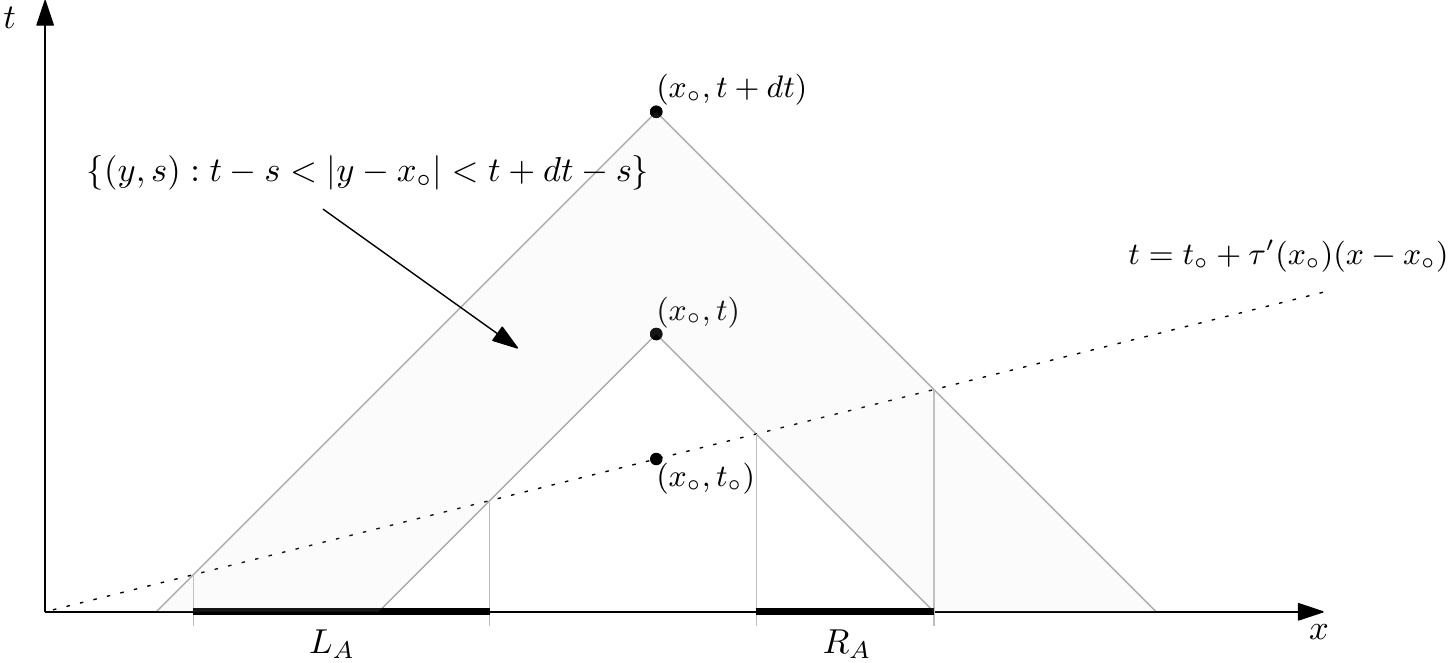}
\caption{Representation of the sets $L_A$ and $R_A$.}
\label{fig.A}
\end{figure}
Note that, as $t\downarrow t_\circ$ and $dt\downarrow 0$, $\tau'$ is essentially constant inside $A_{x_\circ,t,dt}$. Hence, a simple geometric argument yields that 
\[
|L_A| = \frac{dt}{1-\tau'(x_\circ)}+o(dt)\qquad\textrm{and}\qquad |R_A| = \frac{dt}{1+\tau'(x_\circ)}+o(dt).
\]
Thus, 
\[
|A_{x_\circ, t, dt}|  = |L_A|+|R_A| = 2\frac{dt}{1-\tau'(x_\circ)^2}+o(dt),
\]
and using \eqref{eq.vdiff} we reach 
\[
\lim_{t \downarrow t_\circ} v_t (x_\circ, t) = \lim_{t \downarrow t_\circ} \lim_{dt \downarrow 0} \frac{v (x_\circ, t+dt) - v(x_\circ, t)}{dt} = -2w_t(x_\circ, t_\circ)
\]
as desired.

This proves that the formula \eqref{eq.sol} guarantees that $u$ flips the velocity when hitting the obstacle. In order to show that \eqref{eq.sol} gives the solution to the obstacle problem for the wave equation, one still needs to show that $u\ge 0$ at all times. 
This is proved in \cite[Proof of Theorem IV.2]{Sch80}, using the characteristic variables (see the next section, for a definition of the characteristic variables). We refer the interested reader to the original proof.

\section{Conservation of the Lipschitz norm}
The goal of this section is to describe how the Lipschitz regularity of the solution is affected by the reflection.

Let $x\mapsto \sigma(x)$ be a non-negative Lipschitz function with $|\sigma'|\le 1$, and define 
\[
v = w + \mathcal{E}\ast \mu(w, \sigma),
\]
where $w$ solves \eqref{eq.fw}, and $\mu(w, \sigma)$ is given by
\[
\langle \mu(w,\sigma), \psi\rangle = -2\int_{\{x : \sigma(x) > 0\}} (1-\sigma'(x)^2)\, w_t(x, \sigma(x) )\, \psi(x, \sigma(x))\, dx \qquad \forall\,\psi \in C_c(\R\times [0,+\infty)).
\]
In particular, when $\sigma = \tau$ we recover the solution given by Schatzman (see \eqref{eq.sol}). 

Let us consider the characteristic variables\footnote{This change of variables is very standard in the theory of the 1D wave equation. The interested reader may consult \cite[Chapter 4]{PR05} for an overview of this theory.}
\[
\xi := \frac{x+t}{\sqrt{2}}\qquad\textrm{and}\qquad \eta := \frac{-x+t}{\sqrt{2}},
\]
and, as in \cite{Sch80}, we use the tilde notation to denote functions in the characteristic coordinates (e.g. $\tilde v(\xi, \eta) = v(x, t)$). We are interested in explicitly writing the derivatives $\tilde v_\xi$ and $\tilde v_\eta$ in the characteristic coordinates. 

Since $\sigma$ has Lipschitz constant 1, we can express it as a graph in the $(\xi, \eta)$-variables in two ways: either as $(\xi, Y(\xi))$ or as $(X(\eta), \eta)$.
In other words, 
\[
\eta   \in Y(\xi) ~\Leftrightarrow ~\frac{\xi+\eta}{\sqrt{2}} = \sigma\left(\frac{\xi-\eta}{\sqrt{2}}\right) ~\Leftrightarrow ~\xi\in X(\eta).
\]
Note that, at some points, the value of $Y$ or $X$ may not be uniquely determined, since these functions may have a vertical segment in their graphs. In this case we shall always refer to $Y(\xi)$ and $X(\eta)$ as the unique upper-semicontinuous representatives. In other words, geometrically, when one take a point $(\xi,\eta)$ and draws the two lines with slope $\pm 1$ to find $(X(\eta),\eta)$ and $(\xi,Y(\xi))$, we always choose $X(\eta)$ and $Y(\xi)$ to be the first point where these lines hit the graph of $\sigma$ (see  Figure~\ref{fig.1}).
It is important to notice that $X = Y^{-1}$.

\begin{figure}
\centering
\includegraphics{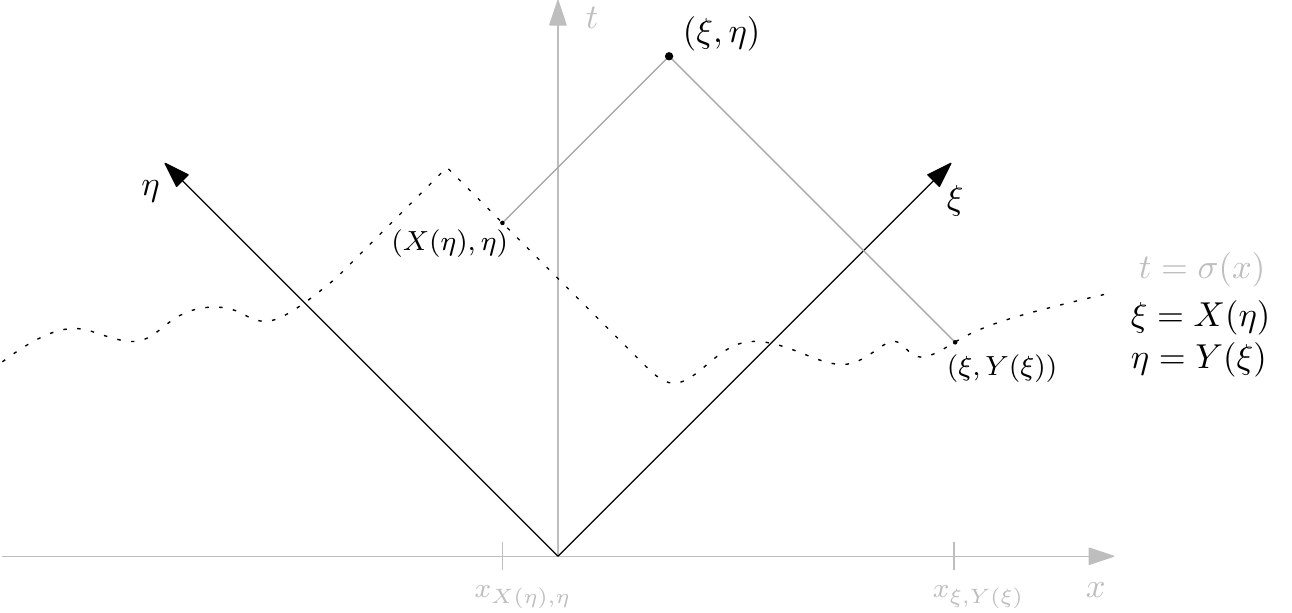}
\caption{Representation of $\sigma$ in characteristic coordinates. }
\label{fig.1}
\end{figure}


By standard transport along the characteristics one can easily show that
\[
\begin{array}{ll}
\tilde v_\xi(\xi, \eta)  = \tilde w_\xi(\xi, Y(\xi))&\quad\textrm{a.e. in}\quad \{\xi < X(\eta)\},\\
\tilde v_\eta(\xi, \eta)  = \tilde w_\eta(X(\eta), \eta)&\quad\textrm{a.e. in}\quad \{\eta  < Y(\xi)\},
\end{array}
\]
that is, derivatives in $\xi$ are transported along lines of the form $\{\xi \equiv {\rm constant}\}$ before the \emph{collision} occurs (i.e., in the region where $v$ is a free wave, before the convolution term appears), and analogously for derivatives in $\eta$.
(This is just a standard consequence of the explicit formula for solutions to the wave equations, that in the $(\xi,\eta)$-variables reads as $\tilde w_{\xi\eta}=0$.)

On the other hand, derivatives in the remaining region are computed in \cite[(III.37) and (III.38)]{Sch80} and are equal to 
\[
\begin{array}{ll}
\tilde v_\xi(\xi, \eta)  = \tilde w_\xi(\xi, Y(\xi))(1-g(\xi))-\tilde w_\eta(\xi, Y(\xi))g(\xi)&\quad\textrm{a.e. in}\quad \{\xi \ge X(\eta)\}\\
\tilde v_\eta(\xi, \eta)  = -\tilde w_\xi(X(\eta), \eta)h(\eta)+\tilde w_\eta(X(\eta), \eta)(1-h(\eta))&\quad\textrm{a.e. in}\quad \{\eta\ge Y(\xi)\},
\end{array}
\]
where 
\[
g(\xi) = \frac{-2Y'(\xi)}{1-Y'(\xi)}\mathbbm{1}_{\{\xi+Y(\xi)>0\}}\quad\textrm{and}\quad h(\eta) = \frac{-2X'(\eta)}{1-X'(\eta)}\mathbbm{1}_{\{X(\eta)+\eta>0\}}
\]
(notice that the indicator functions above represent the region where $\sigma > 0$).  Equivalently, if we denote 
\[
x_{\xi, \eta} := \frac{\xi-\eta}{\sqrt{2}},
\]
we can express the previous derivatives in terms of $\sigma$ instead of the functions $X$ and $Y$:
\[
\tilde v_\xi(\xi, \eta)  = \tilde w_\xi(\xi, Y(\xi))\sigma'(x_{\xi,Y(\xi)})-\tilde w_\eta(\xi, Y(\xi))(1-\sigma'(x_{\xi,Y(\xi)}))~\textrm{ a.e. in }\{\xi \ge X(\eta)\}
\]
and 
\[
\tilde v_\eta(\xi, \eta)  = -\tilde w_\xi(X(\eta), \eta)(1+\sigma'(x_{X(\eta),\eta}))-\tilde w_\eta(X(\eta), \eta)\sigma'(x_{X(\eta),\eta})\textrm{ a.e. in }\{\eta\ge Y(\xi)\}.
\]
We have ignored the region where $\sigma = 0$ in this case, because a posteriori we will use $\sigma = \tau$ and, thanks to the assumptions in Theorem~\ref{thm.sch}, $\tau (x)> 0$ for a.e. $x\in \R$.

\begin{prop}
\label{prop.Lip}
Under the same assumptions as in Theorem~\ref{thm.sch}, the solution of the obstacle problem for the wave equation $u$ fulfills that 
\[
|u_\xi|_\eta \equiv |u_\eta|_\xi \equiv 0
\]
a.e. in $\xi+\eta \ge 0$. In particular, 
\[
|u_\xi(x, t)| = |u_\xi(x+t, 0)| \qquad\textrm{and}\qquad |u_\eta(x, t)| =  |u_\eta(x-t, 0)| 
\]
a.e. in $t \ge 0$. 
\end{prop}
\begin{proof}
Thanks to the discussion above, by taking $\sigma = \tau$ we are considering the solution by Schatzman. Thus, we know that 
\[
\begin{array}{ll}
\tilde u_\xi(\xi, \eta)  = \tilde w_\xi(\xi, Y(\xi))&\textrm{a.e. in }\{\xi < X(\eta)\}\\
\tilde u_\xi(\xi, \eta)  = \tilde w_\xi(\xi, Y(\xi))\tau'(x_{\xi,Y(\xi)})-\tilde w_\eta(\xi, Y(\xi))(1-\tau'(x_{\xi,Y(\xi)}))&\textrm{a.e. in }\{\xi \ge X(\eta)\},
\end{array}
\]
and
\[
\begin{array}{l}
\tilde u_\eta(\xi, \eta)  = \tilde w_\eta(X(\eta), \eta)\hspace{7.25cm}\textrm{ a.e. in } \{\eta  < Y(\xi)\}~\\
\tilde u_\eta(\xi, \eta)  = -\tilde w_\xi(X(\eta), \eta)(1+\tau'(x_{X(\eta),\eta}))-\tilde w_\eta(X(\eta), \eta)\tau'(x_{X(\eta),\eta})\\
\hfill\textrm{ a.e. in }\{\eta\ge Y(\xi)\}.
\end{array}
\]
We now notice that,  whenever $|\tau'(x)| < 1$ (that is, at collision points), $\tau'(x)$ can be expressed in terms of $w_x(x, \tau(x))$ and $w_t(x, \tau(x))$ as 
\[
\tau'(x) = -\frac{w_x(x,\tau(x))}{w_t(x,\tau(x))}\qquad\textrm{if}\quad|\tau'(x)|< 1.
\]
Using that $w_x  = \frac{1}{\sqrt{2}}\left(w_\xi-w_\eta\right) $ and $w_t  = \frac{1}{\sqrt{2}}\left(w_\xi+w_\eta\right) $ we obtain that
\[
\tau'(x_{\xi,\eta}) = -\frac{\tilde w_\xi(\xi,\eta)- \tilde w_\eta(\xi,\eta)}{\tilde w_\xi(\xi,\eta)+ \tilde w_\eta(\xi,\eta)}\qquad\textrm{if}\quad\frac{\xi+\eta}{\sqrt{2}} = \tau\left(\frac{\xi-\eta}{\sqrt{2}}\right)\quad\textrm{and}\quad|\tau'(x_{\xi,\eta})|< 1.
\]

Thus,
\[
\begin{array}{ll}
\tilde u_\xi(\xi, \eta)  = \tilde w_\xi(\xi, Y(\xi))&\quad\textrm{ a.e. in }\{\xi < X(\eta)\}\cup\{\xi \ge X(\eta),\tau'(x_{\xi,Y(\xi)}) = 1\}\\
\tilde u_\xi(\xi, \eta)  = -\tilde w_\xi(\xi, Y(\xi))&\quad\textrm{ a.e. in }\{\xi \ge X(\eta),\tau'(x_{\xi,Y(\xi)})< 1\},
\end{array}
\]
and
\[
\begin{array}{ll}
\tilde u_\eta(\xi, \eta)  = \tilde w_\eta(X(\eta),\eta)&\quad\textrm{ a.e. in }\{\eta < Y(\xi)\}\cup\{\eta \ge Y(\xi),\tau'(x_{X(\eta),\eta}) = -1\}\\
\tilde u_\eta(\xi, \eta)  = -\tilde w_\eta(X(\eta),\eta)&\quad\textrm{ a.e. in }\{\eta \ge Y(\xi),\tau'(x_{X(\eta),\eta})> -1\}.
\end{array}
\]

This implies that
\[
|\tilde u_\xi(\xi, \eta)|  = |\tilde w_\xi(\xi, Y(\xi))|\quad\textrm{and}\quad|\tilde u_\eta(\xi, \eta)|  = |\tilde w_\eta(X(\eta),\eta)|
\]
for a.e. $(\xi, \eta)$ with $\xi+\eta > 0$, and the result follows. 
\end{proof}

The previous proposition establishes a very clear intuition of what the solution to the obstacle problem looks like (or, more precisely, how the derivatives in the directions of the characteristics look like). In particular, it establishes a partition of the region $I$ into three different parts, depending on which directional derivative in $\xi$ and $\eta$ flips sign with respect to the free wave equation, see Figure~\ref{fig.2}. 

\begin{figure}
\centering
\includegraphics{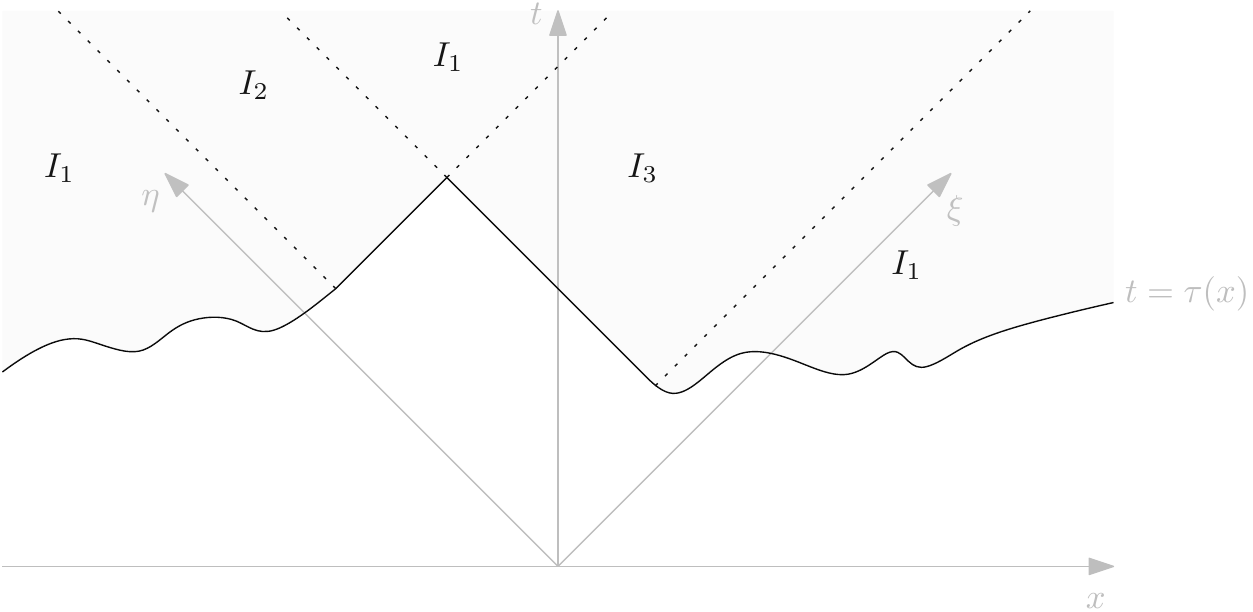}
\caption{$I = I_1\cup I_2\cup I_3$, where in $I_1$ both derivatives in $\xi$ and $\eta$ are flipped along characteristics, in $I_2$ only the derivative in $\eta$ is flipped, and in $I_3$ only the derivative in $\xi$.}
\label{fig.2}
\end{figure}

As a consequence of the previous proposition we immediately obtain that if we start from Lipschitz initial data (namely, $u_x$ and $u_t$ bounded), the solution remains Lipschitz at all times.

\begin{cor}
\label{cor.Lip}
Let $u_0\in W^{1, \infty}_{\rm loc}(\R)$ and $u_1\in L^\infty_{\rm loc}(\R)$. Assume that $u_0 \ge 0$, and that $u_1 \ge 0$ a.e. inside $\{u_0 = 0\}$. Then, there exists a unique solution $u\in L^\infty_{{\rm loc}, t} ((0, \infty); W^{1, \infty}_{{\rm loc}, x} (\R))\cap W^{1,\infty}_{{\rm loc}, t} ((0, \infty); L^{ \infty}_{{\rm loc}, x} (\R))$. Moreover, if $u_0\in W^{1, \infty}(\R)$ and $u_1\in L^\infty(\R)$ then 
\[
\begin{split}
\|u_x(\cdot, t) - u_t(\cdot, t)\|_{L^\infty(\R)} & = \|u_x(\cdot,0) - u_t(\cdot,0)\|_{L^\infty(\R)},\\
\|u_x(\cdot, t) + u_t(\cdot, t)\|_{L^\infty(\R)} & = \|u_x(\cdot,0) + u_t(\cdot,0)\|_{L^\infty(\R)}\qquad \text{for a.e. $t \geq 0$}.
\end{split}
\]
In particular, 
\[
\frac{1}{\sqrt{2}} \|\nabla_{x,t} u(\cdot, 0)\|_{L^\infty(\R)}\le \|\nabla_{x,t} u(\cdot, t)\|_{L^\infty(\R)} \le \sqrt{2} \|\nabla_{x,t} u(\cdot, 0)\|_{L^\infty(\R)}
\]
for a.e. $t\ge 0$. 
\end{cor}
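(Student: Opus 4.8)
The plan is to derive Corollary~\ref{cor.Lip} directly from Proposition~\ref{prop.Lip}, together with the construction of the solution via Schatzman's formula \eqref{eq.sol}. First I would note that the existence and uniqueness of a solution in the stated class follows from Theorem~\ref{thm.sch} together with a regularity bootstrap: since $u_0$ is locally Lipschitz and $u_1$ is locally bounded, the free wave $w$ from \eqref{eq.fw} is locally Lipschitz in space-time (d'Alembert's formula preserves $W^{1,\infty}_{\rm loc}$), hence so are $\tilde w_\xi$ and $\tilde w_\eta$ as $L^\infty_{\rm loc}$ functions; then the explicit formulas for $\tilde u_\xi$ and $\tilde u_\eta$ recalled before Proposition~\ref{prop.Lip} (which express these derivatives as $\tilde w_\xi$ or $\tilde w_\eta$ composed with the Lipschitz maps $\xi\mapsto(\xi,Y(\xi))$ and $\eta\mapsto(X(\eta),\eta)$, up to sign) show that $\tilde u_\xi, \tilde u_\eta \in L^\infty_{\rm loc}$, which is equivalent to $u \in W^{1,\infty}_{\rm loc}$ in $(x,t)$. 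Uniqueness is inherited verbatim from Theorem~\ref{thm.sch} since the class here is contained in the class there.

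Next I would prove the two displayed $L^\infty(\R)$ identities. The key input is the pointwise identity from Proposition~\ref{prop.Lip}, which gives $|\tilde u_\xi(\xi,\eta)| = |\tilde w_\xi(\xi, Y(\xi))|$ and $|\tilde u_\eta(\xi,\eta)| = |\tilde w_\eta(X(\eta),\eta)|$ for a.e.\ $(\xi,\eta)$ with $\xi+\eta>0$, combined with the fact that for the free wave $\tilde w_\xi$ is constant along lines $\{\xi = \mathrm{const}\}$ and $\tilde w_\eta$ is constant along lines $\{\eta=\mathrm{const}\}$ (i.e.\ $\tilde w_{\xi\eta}=0$). Concretely, $\tilde w_\xi(\xi,Y(\xi)) = \tilde w_\xi(\xi,\eta_0)$ for any $\eta_0$, in particular evaluating on the initial line $t=0$, i.e.\ $\eta = -\xi$ (so $x = \sqrt2\,\xi$, $t=0$). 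Rewriting in $(x,t)$ coordinates and using $\sqrt2\, u_\xi = u_x + u_t$, $\sqrt2\, u_\eta = -u_x+u_t$ (equivalently $u_\eta = -(u_x-u_t)/\sqrt2$), one gets $|(u_x+u_t)(x,t)| = |(u_x+u_t)(x+t,0)|$ and $|(u_x-u_t)(x,t)| = |(u_x-u_t)(x-t,0)|$ for a.e.\ $(x,t)$. Taking the essential supremum over $x\in\R$ on both sides, and observing that the maps $x\mapsto x\pm t$ are measure-preserving bijections of $\R$, yields exactly the claimed equalities of $\|u_x\pm u_t\|_{L^\infty(\R)}$; this also matches the second conclusion of Proposition~\ref{prop.Lip} integrated over $x$.

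Finally, for the $\|\nabla_{x,t}u\|_{L^\infty}$ sandwich I would use the elementary pointwise comparison between $\sqrt{u_x^2+u_t^2}$ and the pair $\{|u_x+u_t|, |u_x-u_t|\}$: since $(u_x+u_t)^2 + (u_x-u_t)^2 = 2(u_x^2+u_t^2)$, one has $\max\{|u_x+u_t|,|u_x-u_t|\} \le \sqrt2\,\sqrt{u_x^2+u_t^2} \le \sqrt2\,\max\{|u_x+u_t|,|u_x-u_t|\}$, hence the quantity $M(t) := \max\{\|u_x(\cdot,t)+u_t(\cdot,t)\|_\infty, \|u_x(\cdot,t)-u_t(\cdot,t)\|_\infty\}$ satisfies $\frac{1}{\sqrt2} M(t) \le \|\nabla_{x,t}u(\cdot,t)\|_\infty \le M(t)$, and the two identities just proved give $M(t) = M(0)$; combining these inequalities at times $t$ and $0$ produces the factor-$\sqrt2$ bounds. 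I do not anticipate a serious obstacle here: the only point requiring a little care is the passage from the a.e.-pointwise identities of Proposition~\ref{prop.Lip} to identities of $L^\infty$ norms, which needs the fact that for a.e.\ $t$ the slicing is compatible (Fubini) and that translation $x\mapsto x\pm t$ preserves the essential supremum — both standard. The substantive content has already been extracted in Proposition~\ref{prop.Lip}, so the corollary is essentially a bookkeeping exercise in changing variables and comparing norms.
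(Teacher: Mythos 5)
Your proposal is correct, and its backbone (deducing everything from Proposition~\ref{prop.Lip} plus the transport of $\tilde w_\xi,\tilde w_\eta$ along characteristics) is the same as the paper's. The one place where you genuinely diverge is the final ``in particular'' sandwich for $\|\nabla_{x,t}u\|_{L^\infty}$. The paper only proves the upper bound $\|\nabla_{x,t}u(\cdot,t)\|_{L^\infty}^2\le 2\|\nabla_{x,t}u(\cdot,0)\|_{L^\infty}^2$ directly (via the pointwise estimate $|u_x(x,t)|^2+|u_t(x,t)|^2\le |\nabla u(x+t,0)|^2+|\nabla u(x-t,0)|^2$ and a limiting argument over intervals $(-L,L)$), and then obtains the lower bound by invoking the time-reversibility of the obstacle problem, i.e.\ that $u(x,T-t)$ is again a solution. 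You instead get both bounds at once from the two norm identities via the algebraic fact $(u_x+u_t)^2+(u_x-u_t)^2=2(u_x^2+u_t^2)$, which gives $\tfrac{1}{\sqrt2}M(t)\le\|\nabla_{x,t}u(\cdot,t)\|_{L^\infty}\le M(t)$ with $M(t)=\max\{\|u_x+u_t\|_{L^\infty},\|u_x-u_t\|_{L^\infty}\}$ conserved. Your route is more elementary and self-contained (no appeal to the structure of the equation beyond Proposition~\ref{prop.Lip}), and it makes the logical dependence on the two displayed identities explicit, which is arguably what the ``in particular'' is meant to convey; the paper's route, on the other hand, records the useful structural observation that the problem is time-reversible. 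Your additional remarks --- the regularity bootstrap for existence in the Lipschitz class, and the fact that translations $x\mapsto x\pm t$ preserve the essential supremum on all of $\R$ --- are correct and fill in details the paper leaves implicit.
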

\begin{proof}
The first part is a restatement of Proposition~\ref{prop.Lip}.

For the second part, from Proposition~\ref{prop.Lip} and by changing of variables,  
\begin{align*}
|u_x(x, t)|^2 + |u_t(x, t)|^2 & = |\tilde u_\xi(x, t)|^2 + |\tilde u_\eta(x, t)|^2  \\
& = |\tilde u_\xi(x+t, 0)|^2 + |\tilde u_\eta(x-t, 0)|^2\\
& \le |u_x(x+t, 0)|^2+|u_t(x+t, 0)|^2 +|u_x(x-t, 0)|^2+|u_t(x-t, 0)|^2.
\end{align*}
In particular, for every compact set $(-L,L)\subset\subset \R$, we are showing that 
$$
\|\nabla_{x,t} u(\cdot, t)\|^2_{L^\infty((-L,L))} \le 2\|\nabla_{x,t} u(\cdot, 0)\|^2_{L^\infty((-L-t,L+t))}.
$$
Letting $L\to +\infty$, this yields
\begin{equation}
\label{eq.onedir}
\|\nabla_{x,t} u(\cdot, t)\|^2_{L^\infty(\R)} \le 2\|\nabla_{x,t} u(\cdot, 0)\|^2_{L^\infty(\R)}.
\end{equation}
On the other hand, notice that solutions to the obstacle problem are time-reversible: if $u(x, t)$ is a solution to an obstacle problem for the wave equation ($\min\{\square u, u\} = 0$ and \eqref{eq.consene} holds), then $v(x, t) = u(x, T-t)$ is also a solution to the obstacle problem for the wave equation. Thus, applying \eqref{eq.onedir} to $u(\cdot, t-\cdot)$ we obtain the desired result. 
\end{proof}

\section{The double obstacle problem}

The previous section not only establishes that the Lipschitz constants of solutions are preserved at all times (in the characteristic variables), but also shows that, when starting from Lipschitz data, the whole problem can be treated at a local level. In particular, this allows us to treat the double obstacle case, and the same reasoning as before yields that solutions to the double obstacle problem preserve the Lipschitz constant (since Proposition~\ref{prop.Lip} still holds). 

That is, consider now that the solution not only is forced to remain above an obstacle $\varphi \equiv 0$, but also is enclosed to be below $\bar\varphi\equiv 1$. Locally, when hitting $\bar\varphi$ the solution is behaving like an obstacle problem for the wave equation (with reverse displacement from the previous configuration). Thus, we obtain the validity of the following: 

\begin{prop}
\label{prop.dlip}
Let $u_0\in W^{1, \infty}_{\rm loc}(\R)$ and $u_1\in L^\infty_{\rm loc}(\R)$. Assume that $0\le u_0 \le  1$, and that $u_1 \ge 0$ a.e. inside $\{u_0 = 0\}$ and $u_1 \le 0$ a.e. inside $\{u_0 = 1\}$. Then, there exists a unique solution $u\in L^\infty_{{\rm loc}, t} ((0, \infty); W^{1, \infty}_{{\rm loc}, x} (\R))\cap W^{1,\infty}_{{\rm loc}, t} ((0, \infty); L^{\infty}_{{\rm loc}, x} (\R))$ to
\begin{equation}
\label{eq.pb2}
\left\{
\begin{array}{rcll}
\min\{\square u, u\} & = & 0& \textrm{ in } (\R\times [0, \infty))\cap\{u < 1\}\\
\min\{-\square u, 1-u\} & = & 0& \textrm{ in } (\R\times [0, \infty))\cap\{u > 0\}\\
u(\cdot, 0) & = &u_0& \textrm{ in } \R\\
u_t(\cdot, 0) & = &u_1& \textrm{ a.e. in } \R.
\end{array}
\right.
\end{equation}
such that \eqref{eq.consene} holds in the sense of distributions. Moreover, if $u_0\in W^{1, \infty}(\R)$ and $u_1\in L^\infty(\R)$ then 
\[
\begin{split}
\|u_x(\cdot, t) - u_t(\cdot, t)\|_{L^\infty(\R)} & = \|u_x(\cdot, 0) - u_t(\cdot, 0)\|_{L^\infty(\R)},\\
\|u_x(\cdot, t) + u_t(\cdot, t)\|_{L^\infty(\R)} & = \|u_x(\cdot, 0) + u_t(\cdot, 0)\|_{L^\infty(\R)}.
\end{split}
\]
In particular, 
\[
\frac{1}{\sqrt{2}} \|\nabla_{x,t} u(\cdot, 0)\|_{L^\infty(\R)}\le \|\nabla_{x,t} u(\cdot, t)\|_{L^\infty(\R)} \le \sqrt{2} \|\nabla_{x,t} u(\cdot, 0)\|_{L^\infty(\R)}
\]
for a.e. $t\ge 0$. 
\end{prop}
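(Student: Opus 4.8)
The plan is to reduce Proposition~\ref{prop.dlip} to the single-obstacle analysis of Section~3 by a localization-in-time-and-space argument. The key observation, already used implicitly in Corollary~\ref{cor.Lip}, is that everything in the construction of the solution (the free wave $w$, the free boundary curve $\tau$, the measure $\mu(w,\tau)$, and the transport-along-characteristics formulas for $\tilde v_\xi,\tilde v_\eta$) is \emph{local}: the value of $u$ at $(x,t)$ depends only on the data in the cone $T_{x,t}^-$. Consequently, given initial data $0\le u_0\le 1$ with $u_1\ge 0$ a.e.\ on $\{u_0=0\}$ and $u_1\le 0$ a.e.\ on $\{u_0=1\}$, I first claim that the solution can be built by a stopping-time iteration: let $w$ be the free wave, follow it until the first time its cone of influence reaches either $\{w=0\}$ or $\{w=1\}$, reflect there exactly as in Schatzman's construction (for the lower obstacle directly, and for the upper obstacle by applying the construction to $1-u$, which solves the zero-obstacle problem with reversed velocity — note the hypothesis on $u_1$ on $\{u_0=1\}$ is precisely what is needed for $1-u_0,-u_1$ to satisfy the hypotheses of Theorem~\ref{thm.sch}), and repeat. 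Since the obstacles are flat, $\square(u)=\square(1-u)=0$ away from contact, so near any contact point with $\bar\varphi$ the problem is genuinely a translated/reflected copy of the single-obstacle problem, and the local Lipschitz bounds of Corollary~\ref{cor.Lip} apply verbatim on each piece.

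The second step is to establish existence and uniqueness in \eqref{eq.pb2}. For \textbf{existence}, I would patch together the local pieces produced by the iteration above: on each characteristic line $\{\xi=\text{const}\}$ (resp.\ $\{\eta=\text{const}\}$), $\tilde u_\xi$ (resp.\ $\tilde u_\eta$) is piecewise equal to $\pm\tilde w_\xi$ transported from $t=0$, with the sign flipping exactly at crossings of the relevant free-boundary curve — this is Proposition~\ref{prop.Lip} applied alternately to $u$ and $1-u$. One must check that collisions with the two obstacles do not accumulate: starting from $W^{1,\infty}$ data, along a fixed characteristic the derivative has constant absolute value, so between two consecutive collisions the characteristic travels a definite distance (controlled below by roughly the obstacle gap $1$ divided by the Lipschitz constant of the data), hence only finitely many collisions occur in any bounded region — this gives a well-defined global solution in the stated class, and \eqref{eq.consene} holds because it holds on each piece and the gluing is along characteristics where the reflection is specular. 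For \textbf{uniqueness}, one argues as in \cite{Sch80}: two solutions coincide with the common free wave outside the first cone of influence of a contact set, and then agree up to the next collision, etc.; time-reversibility (as in the proof of Corollary~\ref{cor.Lip}) and the finiteness of collisions in bounded regions close the induction.

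The third step is the conclusion, which is then immediate: by Proposition~\ref{prop.Lip} (applied piecewise, alternately to $u$ near the lower obstacle and to $1-u$ near the upper obstacle — note $(1-u)_x=-u_x$, $(1-u)_t=-u_t$, so $|u_\xi|$ and $|u_\eta|$ are unchanged), we get $|\tilde u_\xi(\xi,\eta)|=|\tilde u_\xi(\xi,Y(\xi))|$ transported back to $t=0$ and similarly for $\tilde u_\eta$, across every reflection off either obstacle. Hence $\|\tilde u_\xi(\cdot,t)\|_{L^\infty}$ and $\|\tilde u_\eta(\cdot,t)\|_{L^\infty}$ are constant in $t$; translating back via $u_\xi=\tfrac1{\sqrt2}(u_x+u_t)$, $u_\eta=\tfrac1{\sqrt2}(-u_x+u_t)$ (so $u_x-u_t=-\sqrt2\,u_\eta$ and $u_x+u_t=\sqrt2\,u_\xi$) gives the two displayed identities, and the $\|\nabla_{x,t}u\|_{L^\infty}$ bounds follow from $|u_x|^2+|u_t|^2=|u_\xi|^2+|u_\eta|^2$ together with the elementary inequalities relating $\max(|a|,|b|)$ and $\|(a+b,\,-a+b)\|$, exactly as in the proof of Corollary~\ref{cor.Lip}.

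I expect the main obstacle to be the \textbf{existence/well-posedness step}: one must verify that collisions with the two obstacles do not accumulate in finite time (this is where Lipschitz — rather than merely $W^{1,2}$ — initial data is genuinely used, since the obstacle gap provides a uniform lower bound on the time between successive collisions along a characteristic), and that Schatzman's explicit solution formula and the proof that $u\ge 0$ (resp.\ $u\le 1$) survive the patching. Once that is in place, the propagation of the Lipschitz norm is a direct consequence of Proposition~\ref{prop.Lip} applied on each interval between collisions, exactly as in Corollary~\ref{cor.Lip}.
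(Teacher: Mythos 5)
Your proposal is correct and follows essentially the same route as the paper: the paper's own proof is a two-line remark that, since the data and solution are locally Lipschitz, Schatzman's construction and Proposition~\ref{prop.Lip} apply locally (alternately to $u$ and $1-u$), and the conclusion then follows as in Corollary~\ref{cor.Lip}. Your write-up supplies more detail than the paper does — notably the explicit non-accumulation-of-collisions argument via the obstacle gap and the Lipschitz bound — but the underlying strategy is identical.
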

\begin{proof}
Since the initial condition and the solution is locally Lipschitz, the construction of the solution by Schatzman as explained above and its properties can also be performed in this case, locally. In particular, Proposition~\ref{prop.Lip} also holds and the desired result follows as in the proof of Corollary~\ref{cor.Lip}.
\end{proof}

\section{An explicit solution by Bamberger and Schatzmam}
\label{sect:counterex}

In \cite{BS83}, Bamberger and Schatzman established an explicit formula for the solution $u$ to the obstacle problem (with zero obstacle) in terms of the free wave solution $w$. Such explicit formula is given by 
\begin{equation}
\label{eq.wrong}
u(x, t) = w(x, t) +2\sup_{(x',t')\in T_{x,t}^-} (w(x',t'))^-,
\end{equation}
where $r^-$ denotes the negative part, that is $r^- := \sup\{-r,0\}$. Unfortunately, as we show here, such formula cannot hold true. 

To see that, consider the problem with initial conditions given by $u_0(x) = \frac12$ and $u_1(x) =\sin(x)$. The free wave solution is explicit, and is given by 
\[
w(x, t) = \frac12 +\sin(x)\sin(t).
\]
In particular, $w$ is bounded and therefore, if the formula above was correct, we would deduce that 
\begin{equation}
\label{eq.contr}
u\le \frac92. 
\end{equation}
Nonetheless, the solution to the obstacle problem in this case goes to infinity as $t\to \infty$, a contradiction. Indeed, recalling \eqref{eq.sol}, the solution is given by 
\[
u(x, t) = w(x, t) + (\mathcal{E}\ast \mu(w))(x, t)
\]
with 
\[
\langle \mu(w), \psi\rangle = -2\int (1-\tau'(x)^2) \,w_t(x, \tau(x) ) \,\psi(x, \tau(x))\, dx \qquad \forall\,\psi \in C_c(\R\times [0,+\infty)).
\]
That is, 
\[
u(x, t) = w(x, t) -\int_{\{t-\tau(z)\ge |x-z|\}}  (1-\tau'(z)^2) \,w_t(z, \tau(z) ) \, dz.
\]
Notice that, whenever $|\tau'(z)|< 1$, it holds $w_t(z, \tau(z))\le 0$
(since the free wave is touching the obstacle $\varphi=0$ coming from above). In addition, at $x_\circ = -\frac{\pi}{2}$ we have $\tau(x_\circ) = \frac{\pi}{6}$, $\tau'(x_\circ) = 0$, and $w_t(x_\circ, \tau(x_\circ)) = \frac{1-\sqrt{3}}{2} < 0$. In particular, by continuity, there exists $c_\circ>0$ such that
\[
\int_{-\pi}^\pi  (1-\tau'(z)^2) w_t(z, \tau(z) ) \, dz = -c_\circ < 0. 
\]
On the other hand, since the solution is $2\pi$-periodic and $\tau$ is 1-Lipschitz, $\tau\le \frac{7\pi}{6}$. Thus,
\[
u(x, t) \ge w(x, t) -\int_{\{|z-x|\le t-7\pi/6\}}  (1-\tau'(z)^2) w_t(z, \tau(z) ) \, dz.
\]
Hence, by $2\pi$-periodicity of the integrand, if $t \ge 2k\pi+\frac{7\pi}{6}$ for some $k\in \N$ then 
\[
u(x, t) \ge w(x, t) +kc_\circ,
\]
and letting $t\to \infty $ (and, therefore, $k\to \infty$) we deduce that $u(x, t) \to \infty$ as $t\to\infty$. This is in contradiction with \eqref{eq.contr}, and thus \eqref{eq.wrong} cannot hold.

\begin{rem}
\label{rem.inf}
In fact, the previous argument shows that solutions with periodic initial data and with an ``active'' obstacle (namely, the solutions hit at some moment the obstacle with positive velocity)
will grow to infinity as time goes by, regardless of the form of the solution. 
\end{rem}

\end{document}